\newcommand{\R}{\mathbb R}
\newtheorem{thm}{Theorem}[section]
\newtheorem{lem}[thm]{Lemma}
\newtheorem{prop}[thm]{Proposition}
\theoremstyle{definition}
\newtheorem{defn}[thm]{Definition}
\theoremstyle{remark}
\newtheorem{remark}{Remark}[section]
\newtheorem{example}{Example}[section]
\newcommand{\ds}{\displaystyle}
\begin{document}

\title[Canonical parameters on a surface in $\mathbb R^4$  ]
{Canonical parameters on a surface in $\mathbb R^4$}%

\thanks{2020 {\it Mathematics Subject Classification}: Primary  53A07; Secondary 53B20}
\author{Ognian Kassabov and Velichka Milousheva}%
\address{Institute of Mathematics and Informatics, Bulgarian Academy of Sciences,
Acad. G. Bonchev Str. bl. 8, 1113, Sofia, Bulgaria}
\email{okassabov@abv.bg}
\email{vmil@math.bas.bg}

\keywords{Canonical parameters, principal parameters}%

\begin{abstract}

In the present paper, we study surfaces in the four-dimensional Euclidean space $\mathbb{R}^4$. We define special principal parameters, which we call canonical, on each surface without minimal points, and prove that the surface admits (at least locally) canonical principal parameters. They can be considered as a generalization of the canonical parameters for minimal surfaces and the canonical parameters for surfaces with parallel normalized mean curvature vector field introduced before. We prove a fundamental existence and uniqueness theorem formulated in terms of canonical principal parameters, which states that the surfaces in $\mathbb{R}^4$  are determined up to a motion by four geometrically determined functions satisfying a system of partial differential equations.

\end{abstract}

\maketitle




\section{Introduction}

A fundamental problem in the local theory of surfaces both in Euclidean and pseudo-Euclidean spaces is the problem of finding a minimal number of functions, satisfying some natural conditions (differential equations), that determine the surface up to a motion in the ambient space. This problem is  solved for minimal surfaces (surfaces with zero mean curvature) of co-dimension two in the Euclidean 4-space $\R^4$. In \cite{Itoh}, T. Itoh introduced special geometric parameters on any minimal surface in  $\R^4$  and later they were used by R. Tribuzy and I. Guadalupe to prove that a minimal surface in  $\R^4$  is determined up to a motion by two invariant functions (the Gaussian curvature and the normal curvature) satisfying a system of two PDEs \cite{Trib-Guad}. The same problem was solved for spacelike and timelike zero mean curvature surfaces in the Minkowski 4-space $\R^4_1$ in \cite{Al-Pal} and \cite{G-M-IJM}, respectively. It was proved that the spacelike or timelike surfaces with zero mean curvature in  $\R^4_1$  admit (at least locally) special isothermal parameters, called \textit{canonical},  such that the two main invariants -- the Gaussian curvature and the normal curvature of the surface satisfy a system of two partial differential equations (called a \textit{system of natural PDEs}). In all these cases, the number of the invariant functions determining the surfaces and the number of the differential equations are both reduced to two. 

Another class of surfaces,  different from the minimal ones, for which the number of functions and the number of differential equations can be reduced, is the class of surfaces with parallel normalized mean curvature vector field. In \cite{G-M-Fil}, G. Ganchev and the second author proved that the surfaces with parallel normalized mean curvature vector field (PNMCVF) in the Euclidean 4-space $\R^4$ and the spacelike PNMCVF-surfaces in the Minkowski 4-space $\R^4_1$ can be described  in terms of three  functions satisfying a system of three partial differential equations. 
A similar result is proved for the class of  timelike surfaces with parallel normalized mean curvature vector field in the Minkowski space $\R^4_1$, see \cite{B-M}. 
The basic  approach to the study of PNMCVF-surfaces both in Euclidean and pseudo-Euclidean spaces  is based on introducing of special geometric parameters on each  such surface which are called \textit{canonical parameters}.  In the case of spacelike PNMCVF-surfaces, canonical principal parameters are introduced, while in the case of timelike PNMCVF-surfaces in $\R^4_1$, canonical isotropic parameters are used. 

\vskip 1mm
In the present paper, we consider the general class of surfaces in the Euclidean 4-space $\R^4$. First we define  canonical principal parameters for these surfaces and then we prove that any surface in $\R^4$ admits (at least locally) canonical parameters. They generalize the canonical parameters introduced for minimal surfaces and for PNMCVF-surfaces and allow us to describe the general class of surfaces in terms of four functions. The main result in the paper is  the Fundamental existence theorem (Theorem \ref{eq:MainTheorem}) formulated in terms of canonical parameters, which states that any surface in $\R^4$  is determined up to a motion by four geometrically determined functions.
Finally, we give some examples that demonstrate our theory.

\section{Preliminaries}

We consider surfaces in the 4-dimensional Euclidean space  $\R^4$  endowed with the standard flat metric 
$\langle ., . \rangle$. If $M^2$ is a  2-dimensional surface in $\R^4$, then $\langle ., . \rangle$ induces a Riemannian metric $g$ on $M^2$. Denote by  $\nabla'$ and $\nabla$ the Levi Civita connections on $\R^4$ and $M^2$,  respectively. We use the notation $\sigma$ for the second fundamental tensor of the surface, i.e. 
$$\nabla'_xy = \nabla_xy + \sigma(x,y)$$
for any tangent vector fields $x$ and $y$.

Let $M^2: z = z(u,v), \, \, (u,v) \in {\mathcal D}$ (${\mathcal D} \subset \R^2$) be a local parametrization of a 2-dimensional surface in $\R^4$ and denote by $E$, $F$, $G$ the coefficients of the 
first fundamental form of $M^2$. Let $\{n_1,n_2\}$ be an orthonormal frame field of the normal bundle. Then, we have the following derivative formulas
$$
	\begin{array}{l}
	\vspace{2mm}
		z_{uu}=\Gamma_{11}^1z_u+\Gamma_{11}^2z_v +c_{11}^1n_1+c_{11}^2n_2,  \\
		\vspace{2mm}
		z_{uv}=\Gamma_{12}^1z_u+\Gamma_{12}^2z_v +c_{12}^1n_1+c_{12}^2n_2, \\
		\vspace{2mm}
		z_{vv}=\Gamma_{22}^1z_u+\Gamma_{22}^2z_v +c_{22}^1n_1+c_{22}^2n_2,
	\end{array}
$$ 
$\Gamma_{ij}^k$ being the Christofel's symbols and $c_{ij}^k$, $i, j, k = 1,2$ being smooth functions on $M^2$. The coefficients of the second fundamental form
of $M^2$ in $\mathbb R^4$, defined in \cite{G-M-2008, G-M-2010}, are given by 
$$
	L\!=\!\frac{2}{\sqrt{EG-F^2}} \left|\begin{array}{cc} 
	\vspace{2mm}
	          c_{11}^1 & c_{11}^2 \\
						c_{12}^1 & c_{12}^2
					\end{array}\right|;
   \;\;
	M\!=\!\frac{1}{\sqrt{EG-F^2}} \left|\begin{array}{cc} 
	\vspace{2mm}
	          c_{11}^1 & c_{11}^2 \\
						c_{22}^1 & c_{22}^2
					\end{array}\right|;
	\;\;
	N\!=\!\frac{2}{\sqrt{EG-F^2}}\left|\begin{array}{cc} 
	\vspace{2mm}
	          c_{12}^1 & c_{12}^2 \\
						c_{22}^1 & c_{22}^2
					\end{array}\right|.
$$
It is proved in \cite{G-M-2008} that $L=M=N=0$ if and only if $M^2$ is contained in
a hyperplane of $\mathbb R^4$  or $M^2$ is a developable ruled surface. 

A Weingarten-type linear map $\gamma$ is defined in the tangent space at any point of the surface, see \cite{G-M-2008, G-M-2010}. This map 
generates two invariant functions $k$ and $\varkappa$ which are related to the coefficients
of the first and the second fundamental forms by the formulas
$$
	k={\rm det}\, \gamma=\frac{LN-M^2}{EG-F^2} \,, \qquad \varkappa=-\frac12 {\rm trace}\, \gamma=\frac{EN-2FM+GL}{2(EG-F^2)} \ .
$$
Since $\gamma$ is a symmetric linear operator, we have $\varkappa^2-k\ge0$ and $\varkappa^2-k=0$ if and only if  (see \cite{G-M-2008}) 
$$
	L=\rho E\,, \qquad M=\rho F\,, \qquad N=\rho G  \qquad (\rho \in \mathbb R) .
$$
It is proved also that the invariant $\varkappa$ coincides with the curvature of the normal connection (normal curvature) of the surface. 

The minimal surfaces (surfaces whose mean curvature vector vanishes at each point) are characterized by the equality $\varkappa^2-k=0$ at each point \cite{G-M-2008}. A result of Eisenhart \cite{Eisen} shows that the class of minimal super-conformal\footnote{Surfaces whose ellipse of normal curvature at each point is a circle.} surfaces in $\R^4$ is locally equivalent to the class of holomorphic curves in  $\mathbb{C}^2 \equiv \R^4$. The Gauss curvature $K$ and the curvature of the normal connection $\varkappa$ of any minimal non-super-conformal surface parametrized by special isothermal parameters satisfy
the following system of  partial differential equations \cite{Trib-Guad}:
\begin{equation} \label{e:sys-min}
\begin{array}{l}
\ds{(K^2 - \varkappa^2)^{\frac{1}{4}}\, \Delta \ln (K^2 - \varkappa^2)} = 8 K\\
[2mm]
\ds{(K^2 - \varkappa^2)^{\frac{1}{4}}\, \Delta \ln \frac{K-\varkappa}{K + \varkappa}= -4 \varkappa}
\end{array}
\end{equation} 
where $\Delta = \ds{\frac{\partial^2}{\partial u^2} + \frac{\partial^2}{\partial v^2}}$ is the Laplace operator.

Further, we consider surfaces in $\R^4$ free of minimal points, i.e. $\varkappa^2 - k > 0$  at any point.
As in the classical differential geometry of surfaces in $\R^3$, the second fundamental form
determines principal tangents and principal lines. 
The surface is parametrized with respect to the principal lines if and only if $F=M=0$.

In \cite{G-M-2008}, a geometrically determined orthonormal frame field $\{x,y,b,l\}$ is introduced in a neighbourhood of each point of the surface in the following way. Assume that the surface is parametrized by the principal lines and denote
$x=\ds{\frac{z_u}{\sqrt E}}, \;\; y=\ds{\frac{z_v}{\sqrt G}}$.
Since the mean curvature vector field $H \neq 0$, we consider the unit normal vector field $b$ defined by 
$b=\displaystyle{\frac{H}{\Vert H \Vert}}$. Denote by $l$ the unit normal vector field such that
$\{x,y,b,l\}$ is a positively oriented orthonormal frame field of $M^2$.
Thus, in a neighbourhood of each point $p \in M^2$ we have a geometrically determined orthonormal frame field $\{x,y,b,l\}$, with respect to which the
following Frenet-type derivative formulas  hold true:
\begin{equation} \label{eq:Frenet-typeFormulas}
	\begin{array}{llll}
	\vspace{2mm}
		\nabla'_xx=&           & \gamma_1y  &  +\nu_1  b  \\
		\vspace{2mm}
		\nabla'_xy=&-\gamma_1x &            &  +\lambda b +\mu l \\
		\vspace{2mm}
		\nabla'_yx=&           & -\gamma_2y &  +\lambda b+\mu l \\
		\nabla'_yy=&\gamma_2x  &            &+\nu_2b
	\end{array}
	\qquad\qquad
	\begin{array}{lllll}
	\vspace{2mm}
		\nabla'_xb=& -\nu_1x    & -\lambda y  &            & +\beta_1 l  \\
		\vspace{2mm}
		\nabla'_yb=& -\lambda x & -\nu_2 y    &            & +\beta_2 l \\
		\vspace{2mm}
		\nabla'_xl=&            & -\mu y      &  -\beta_1b &            \\
		\nabla'_yl=& -\mu x     &             &  -\beta_2b &
	\end{array}
\end{equation}
where $\nu_1,\nu_2,\gamma_1,\gamma_2,\beta_1,\beta_2,\lambda,\mu$ are smooth functions determined by the geometric frame field $\{x,y,b,l\}$. We call these functions \textit{geometric functions} of the surface. Then, it can easily be obtained (see \cite{G-M-2008}) that
$$
	k=-4\nu_1\nu_2\mu^2\,, \qquad \varkappa=(\nu_1-\nu_2)\mu\,, \qquad K=\nu_1\nu_2-(\lambda^2+\mu^2) \,,
$$
$K$ being the Gauss curvature of $M^2$. The functions $\gamma_1$ and $\gamma_2$  are expressed in terms of the coefficients of the first fundamental form by the formulas
\begin{equation} \label{eq:gamma}
	\gamma_1=-\frac{(\sqrt E)_v}{\sqrt E\sqrt G},  \qquad \gamma_2=-\frac{(\sqrt G)_u}{\sqrt E\sqrt G},
\end{equation} 
and the length of the mean curvature vector $H$ is 
$$
	\| H \|=\frac{\sqrt{\varkappa^2-k}}{2|\mu|} \ .
$$
Note that $\mu \neq 0$ for surfaces free of minimal points.

The functions $\nu_1,\nu_2,\gamma_1,\gamma_2,\beta_1,\beta_2,\lambda,\mu$ satisfy the following conditions (see (5.3) in \cite{G-M-2008} or (9) in \cite{G-M-2010}):
\begin{equation} \label{eq:BasicSystem}
	\begin{array}{l}
		\ds 2\mu\gamma_2+\nu_1\beta_2-\lambda\beta_1=\frac{\mu_u}{\sqrt E} \ ; \\
	  \ds	2\mu\gamma_1-\lambda\beta_2+\nu_2\beta_1=\frac{\mu_v}{\sqrt G}\ ; \\
	  \ds 2\lambda\gamma_2+\mu\beta_1-(\nu_1-\nu_2)\gamma_1=\frac{\lambda_u}{\sqrt E}-\frac{(\nu_1)_v}{\sqrt G} \ ; \\
	  \ds 2\lambda\gamma_1+\mu\beta_2+(\nu_1-\nu_2)\gamma_2=-\frac{(\nu_2)_u}{\sqrt E}+\frac{\lambda_v}{\sqrt G} \ ; \\
	  \ds \nu_1\nu_2-(\lambda^2+\mu^2)=\frac{(\gamma_2)_u}{\sqrt E}+\frac{(\gamma_1)_v}{\sqrt G}-\big( (\gamma_1)^2+(\gamma_2)^2) \ ; \\
	  \ds \gamma_1\beta_1-\gamma_2\beta_2+(\nu_1-\nu_2)\mu=-\frac{(\beta_2)_u}{\sqrt E}+\frac{(\beta_1)_v}{\sqrt G} \ .
	\end{array}
\end{equation}

Moreover, it is proved that  these eight functions $\nu_1,\nu_2,\gamma_1,\gamma_2,\beta_1,\beta_2,\lambda,\mu$ under some natural conditions determine the surface up
to a motion in $\R^4$ (see \cite{G-M-2010}, Theorem 4.1). This is the so-called  Bonnet-type Fundamental theorem  giving the number of functions and the  number of differential equations determining the surface up to a motion in the general case.

In the next section, we will introduce special principal parameters, which we call canonical. This allows us to reduce up to four the number of functions  and the  number of PDEs determining the surface.




\setcounter{equation}{0}

\section{Canonical principal parameters }

We suppose that the surface is nowhere minimal (i.e. $H$ does not vanish) and is parametrized by principal parameters.

Using (\ref{eq:gamma}) and
(\ref{eq:BasicSystem}) we obtain the following equalities:
\begin{equation} \label{eq:3.1}
	\begin{array}{l}
	\vspace{2mm}
		\ds -2\mu\frac{(\sqrt{ G})_u}{\sqrt E\sqrt G}+\nu_1\beta_2-\lambda\beta_1=\frac{\mu_u}{\sqrt E} \ ; \\
		\vspace{2mm}
	  \ds	-2\mu\frac{(\sqrt{ E})_v}{\sqrt E\sqrt G}-\lambda\beta_2+\nu_2\beta_1=\frac{\mu_v}{\sqrt G} \ ;\\
		\vspace{2mm}
	  \ds -2\lambda\frac{(\sqrt{ G})_u}{\sqrt E\sqrt G}+\mu\beta_1+(\nu_1-\nu_2)\frac{(\sqrt{ E})_v}{\sqrt E\sqrt G}=\frac{\lambda_u}{\sqrt E}-\frac{(\nu_1)_v}{\sqrt G} \ ;\\
		\vspace{2mm}
	  \ds -2\lambda\frac{(\sqrt{ E})_v}{\sqrt E\sqrt G}+\mu\beta_2-(\nu_1-\nu_2)\frac{(\sqrt{ G})_u}{\sqrt E\sqrt G}=-\frac{(\nu_2)_u}{\sqrt E}+\frac{\lambda_v}{\sqrt G} \ .
	\end{array}
\end{equation}
From the last two equations of \eqref{eq:3.1} we get
\begin{equation*} \label{eq:mu_beta}
	\begin{array}{l}
	\vspace{2mm}
		\ds\mu\beta_1
			=2\lambda\frac{(\sqrt{ G})_u}{\sqrt E\sqrt G}-(\nu_1-\nu_2)\frac{(\sqrt{ E})_v}{\sqrt E\sqrt G}+\frac{\lambda_u}{\sqrt E}-\frac{(\nu_1)_v}{\sqrt G} \ ; \\
			\vspace{2mm}
		\ds\mu\beta_2
			=2\lambda\frac{(\sqrt{ E})_v}{\sqrt E\sqrt G}+(\nu_1-\nu_2)\frac{(\sqrt{ G})_u}{\sqrt E\sqrt G}-\frac{(\nu_2)_u}{\sqrt E}+\frac{\lambda_v}{\sqrt G} \ .
	\end{array}
\end{equation*}
Hence, since $\mu \neq 0$, we can express $\beta_1$ and $\beta_2$ in terms of the other functions and replace them in the first two equations of (\ref{eq:3.1}). So, we obtain
$$
	\begin{array}{l}
	\vspace{2mm}
		\ds  -\Big( 2\lambda^2+2\mu^2-\nu_1(\nu_1-\nu_2)\Big)\frac{(\sqrt{ G})_u}{\sqrt {EG}}
		+\lambda\Big( 2\nu_1+(\nu_1-\nu_2)\Big)\frac{(\sqrt{ E})_v}{\sqrt {EG}}  \\
		\vspace{2mm}
		\ds +\nu_1\left( \frac{\lambda_v}{\sqrt G}-\frac{(\nu_2)_u}{\sqrt E}\right)
		+\lambda\left(\frac{(\nu_1)_v}{\sqrt G}-\frac{\lambda_u}{\sqrt E}\right) =\mu\frac{\mu_u}{\sqrt E} \ ;
	\end{array}
$$
$$
	\begin{array}{l}
	\vspace{2mm}
	  \ds	\lambda\Big(2\nu_2-(\nu_1-\nu_2)\Big)\frac{(\sqrt{ G})_u}{\sqrt {EG}}
	  -\Big(2\lambda^2+2\mu^2+\nu_2(\nu_1-\nu_2)\Big)\frac{(\sqrt{ E})_v}{\sqrt {EG}}  \\
		\vspace{2mm}
	  \ds +\lambda\left(\frac{(\nu_2)_u}{\sqrt E}-\frac{\lambda_v}{\sqrt G}\right)
	  +\nu_2\left(\frac{\lambda_u}{\sqrt E}-\frac{(\nu_1)_v}{\sqrt G}\right)=\mu\frac{\mu_v}{\sqrt G} \ .
\end{array}
$$

In this way we derive a system of the form
\begin{equation*} \label{eq:3.3-v1}
	\begin{array}{l}
	\vspace{2mm}
	\left(\sqrt E\right)_v=f_1\sqrt E+f_2\sqrt G \ ; \\
	\vspace{2mm}
	\left(\sqrt G\right)_u=f_3\sqrt E+f_4\sqrt G \ ,
	\end{array}
\end{equation*} 
which is equivalent to 
\begin{equation} \label{eq:3.3-v2}
	\begin{array}{l}
	\vspace{2mm}
		\ds\left(\ln\sqrt E\right)_v=f_1+f_2\frac{\sqrt G}{\sqrt E} \ ; \\
		\vspace{2mm}
		\ds\left(\ln\sqrt G\right)_u=f_3\frac{\sqrt E}{\sqrt G}+f_4 \ ,
	\end{array}   
\end{equation} 
where 
\begin{equation} \label{eq:functions}
\begin{array}{l}
	\vspace{3mm}
	f_1 \!= \! \ds{-\frac{\frac12\left((\lambda^2+\mu^2)^2\right)_v-(\lambda^2)_v\nu_1\nu_2+2\mu^2\nu_2(\nu_1)_v
	                   +\left(\lambda^2(\nu_1)_v-\frac12(\mu^2)_v\nu_1-\frac12(\nu_1^2)_v\nu_2\right)(\nu_1-\nu_2)}
	{4(\lambda^2+\mu^2)^2+(\lambda^2-2\mu^2-\nu_1\nu_2)(\nu_1-\nu_2)^2-4\lambda^2\nu_1\nu_2}} \ ; \\
	\vspace{3mm}
	f_2 \!=  \! \ds{\frac{\lambda_u(\lambda^2-\nu_1\nu_2)(\nu_1-\nu_2)+2\lambda_u\mu^2\nu_2+2\lambda^3(\nu_2)_u
	     +\frac12\lambda(\mu^2)_u(\nu_1-3\nu_2)+2\lambda\mu^2(\nu_2)_u-\lambda\nu_1(\nu_2^2)_u}
	{4(\lambda^2+\mu^2)^2+(\lambda^2-2\mu^2-\nu_1\nu_2)(\nu_1-\nu_2)^2-4\lambda^2\nu_1\nu_2}} \ ;\\
	\vspace{3mm}
	f_3 \!= \! \ds{\frac{\lambda_v(\lambda^2-\nu_1\nu_2)(\nu_2-\nu_1)+2\lambda_v\mu^2\nu_1+2\lambda^3(\nu_1)_v
	          +\frac12\lambda(\mu^2)_v(\nu_2-3\nu_1)+2\lambda\mu^2(\nu_1)_v-\lambda(\nu_1^2)_v\nu_2}
	{4(\lambda^2+\mu^2)^2+(\lambda^2-2\mu^2-\nu_1\nu_2)(\nu_1-\nu_2)^2-4\lambda^2\nu_1\nu_2} }\ ; \\
	\vspace{3mm}
	f_4 \!= \! \ds{-\frac{\frac12\left((\lambda^2+\mu^2)^2\right)_u-(\lambda^2)_u\nu_1\nu_2+2\mu^2\nu_1(\nu_2)_u
	    +\left(\lambda^2(\nu_2)_u-\frac12(\mu^2)_u\nu_2-\frac12\nu_1(\nu_2^2)_u\right)(\nu_2-\nu_1)}
	{4(\lambda^2+\mu^2)^2+(\lambda^2-2\mu^2-\nu_1\nu_2)(\nu_1-\nu_2)^2-4\lambda^2\nu_1\nu_2}} \ .
\end{array}
\end{equation}
\label{formulas fi} 

\vskip 1mm
By use of (\ref{eq:3.3-v2}) we can easily prove the following result.

\begin{lem} \label{lemma:ConstantFunctions} The functions
\begin{equation}
	\begin{array}{l}  \label{eq:semi-constantFunctions}
		\sqrt E\, e^{-\ds\int_{v_0}^v\left(f_1+f_2\frac{\sqrt G}{\sqrt E}\right)dv}  \\
		\vspace{2mm}
		\sqrt G\, e^{-\ds\int_{u_0}^u\left(f_3\frac{\sqrt E}{\sqrt G}+f_4\right)du}
	\end{array} 
\end{equation}
do not depend on $v$ and $u$, respectively.
\end{lem}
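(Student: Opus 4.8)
The plan is to prove the lemma by a direct differentiation, the only input being the system (\ref{eq:3.3-v2}) just derived.

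For the first function, I would set
\[
\Phi(u,v):=\sqrt E\,\exp\!\left(-\int_{v_0}^{v}\Big(f_1+f_2\tfrac{\sqrt G}{\sqrt E}\Big)\,dv\right),
\]
the integrand being the smooth function on the right-hand side of the first equation of (\ref{eq:3.3-v2}) (with $f_1,f_2$ as in (\ref{eq:functions})). Differentiating with respect to $v$, the product rule together with the Fundamental Theorem of Calculus and the smoothness of the integrand give
\[
\Phi_v=\exp\!\left(-\int_{v_0}^{v}\Big(f_1+f_2\tfrac{\sqrt G}{\sqrt E}\Big)\,dv\right)\left[\big(\sqrt E\big)_v-\sqrt E\,\Big(f_1+f_2\tfrac{\sqrt G}{\sqrt E}\Big)\right].
\]
But the first equation of (\ref{eq:3.3-v2}) reads $\big(\ln\sqrt E\big)_v=f_1+f_2\tfrac{\sqrt G}{\sqrt E}$, equivalently $\big(\sqrt E\big)_v=\sqrt E\big(f_1+f_2\tfrac{\sqrt G}{\sqrt E}\big)$, so the bracket vanishes identically; hence $\Phi_v\equiv0$ and $\Phi$ is independent of $v$.

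For the second function I would argue in the same way with the roles of $u$ and $v$ interchanged: setting $\Psi(u,v):=\sqrt G\,\exp\!\big(-\int_{u_0}^{u}(f_3\tfrac{\sqrt E}{\sqrt G}+f_4)\,du\big)$ and differentiating in $u$, the bracket that appears is $\big(\sqrt G\big)_u-\sqrt G\big(f_3\tfrac{\sqrt E}{\sqrt G}+f_4\big)$, which vanishes by the second equation of (\ref{eq:3.3-v2}); hence $\Psi_u\equiv0$ and $\Psi$ is independent of $u$.

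There is no genuine obstacle. Conceptually, (\ref{eq:3.3-v2}) recasts the first two equations of the basic system as first-order linear equations $(\ln w)_v=\varphi$ in $v$ (resp. in $u$) for $w=\sqrt E$ (resp. $w=\sqrt G$), and any such equation forces $w\,e^{-\int\varphi\,dv}$ to be constant in $v$ — these are exactly the quantities listed in (\ref{eq:semi-constantFunctions}). The only points worth a word are already part of the setup leading to (\ref{eq:3.3-v2}): $\mu\neq0$ holds by the standing assumption that the surface is free of minimal points, and this is what permits the elimination of $\beta_1,\beta_2$ and hence the passage to (\ref{eq:3.3-v2}).
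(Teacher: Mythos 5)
Your proof is correct and is exactly the argument the paper intends: the lemma is stated as an easy consequence of the system (\ref{eq:3.3-v2}), and your direct differentiation showing that $(\ln\sqrt E)_v=f_1+f_2\frac{\sqrt G}{\sqrt E}$ forces the $v$-derivative of $\sqrt E\,e^{-\int(f_1+f_2\sqrt G/\sqrt E)\,dv}$ to vanish (and symmetrically in $u$) is precisely that consequence. No gaps.
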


Lemma \ref{lemma:ConstantFunctions} implies that for an arbitrary point $(u_0,v_0)$ and for any constants $c_1$, $c_2$
there exist functions $\varphi(u)$ and $\psi(v)$ such that
\begin{equation} \label{eq:phi-psi}
\begin{array}{l} 
	\vspace{2mm}
		\ds\sqrt Ee^{-\ds\int_{v_0}^v\left(f_1+f_2\frac{\sqrt G}{\sqrt E}\right)dv
		                -\int_{u_0}^u\left(f_3\frac{\sqrt E}{\sqrt G}+f_4\right)(u,v_0)du+c_1}=\varphi(u) \ ;  \\
  \vspace{2mm}
		\ds\sqrt Ge^{-\ds\int_{u_0}^u\left(f_3\frac{\sqrt E}{\sqrt G}+f_4\right)du
		                -\int_{v_0}^v\left(f_1+f_2\frac{\sqrt G}{\sqrt E}\right)(u_0,v)dv+c_2}=\psi(v) \ .
	\end{array}   
\end{equation}

We give the following definition: 

\begin{defn} \label{D:def-can}
 We say that the principal parameters $(u,v)$ of a surface free of minimal points in $\R^4$ are {\it canonical principal parameters}
if the functions $\varphi(u)$ and $\psi(v)$ defined by \eqref{eq:phi-psi} are equal to 1.  
\end{defn}

\begin{lem} \label{eq:Lemma}
Suppose $\bar u=\bar u(u)$, $\bar v= \bar v(v) $ and $\bar u_0=\bar u(u_0)=u_0$, 
$\bar v_0=\bar v(v_0)=v_0$.Then
$$
	e^{\ds\int_{\bar v_0}^{\bar v}\left(\bar f_1+\bar f_2\frac{\sqrt {\bar G}}{\sqrt {\bar E}}\right)(\bar u,\bar v)d\bar v
		                +\int_{\bar u_0}^{\bar u}\left(\bar f_3\frac{\sqrt {\bar E}}{\sqrt {\bar G}}+\bar f_4\right)(\bar u,\bar v_0)d\bar u}
$$
$$	=e^{\ds\int_{v_0}^v\left(f_1+f_2\frac{\sqrt G}{\sqrt E}\right)(u,v)dv
		                +\int_{u_0}^u\left(f_3\frac{\sqrt {E}}{\sqrt {G}}+f_4\right)(u,v_0)du}
$$\end{lem}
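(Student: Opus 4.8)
The plan is to exploit the fact that the change of parameters $\bar u=\bar u(u)$, $\bar v=\bar v(v)$ is \emph{separated}: each new parameter depends only on the corresponding old one. Such a change sends the principal net to the principal net (the coordinate curves are the same curves, merely reparametrized, so $F=M=0$ passes to $\bar F=\bar M=0$), and — the key point — it leaves the geometrically defined frame $\{x,y,b,l\}$ unchanged. Indeed, from $z_u=\bar u'(u)\,z_{\bar u}$, $z_v=\bar v'(v)\,z_{\bar v}$ one gets $\sqrt{\bar E}=\sqrt E/\bar u'$ and $\sqrt{\bar G}=\sqrt G/\bar v'$ (restricting to the orientation-preserving case $\bar u'>0$, $\bar v'>0$), hence $\bar x=z_{\bar u}/\sqrt{\bar E}=z_u/\sqrt E=x$ and $\bar y=y$; since $b=H/\|H\|$ and $l$ are intrinsic, $\bar b=b$ and $\bar l=l$. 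Reading the coefficients off the Frenet-type formulas \eqref{eq:Frenet-typeFormulas}, the eight geometric functions are therefore the \emph{same functions on $M^2$}; in coordinates, $\bar\nu_1(\bar u,\bar v)=\nu_1(u,v)$, and likewise for $\nu_2,\lambda,\mu$ (and $\gamma_1,\gamma_2,\beta_1,\beta_2$). Finally, by the chain rule every geometric function $g$ satisfies $\bar g_{\bar u}=g_u/\bar u'$ and $\bar g_{\bar v}=g_v/\bar v'$.

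Next I would record how the building blocks of the $\bar f_i$ rescale. The structural observation about \eqref{eq:functions} is that the common denominator $4(\lambda^2+\mu^2)^2+(\lambda^2-2\mu^2-\nu_1\nu_2)(\nu_1-\nu_2)^2-4\lambda^2\nu_1\nu_2$ contains no derivatives and is hence unchanged, while each numerator is a sum of terms of the shape (derivative-free factor)$\times$(one first-order partial derivative) — a $v$-derivative throughout the numerators of $f_1$ and $f_3$, a $u$-derivative throughout those of $f_2$ and $f_4$. Consequently $\bar f_1=f_1/\bar v'$, $\bar f_3=f_3/\bar v'$, $\bar f_2=f_2/\bar u'$, $\bar f_4=f_4/\bar u'$. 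Combining these with $\sqrt{\bar G}/\sqrt{\bar E}=(\bar u'/\bar v')\,\sqrt G/\sqrt E$ gives the clean identities
\[
	\bar f_1+\bar f_2\frac{\sqrt{\bar G}}{\sqrt{\bar E}}=\frac1{\bar v'}\Big(f_1+f_2\frac{\sqrt G}{\sqrt E}\Big),
	\qquad
	\bar f_3\frac{\sqrt{\bar E}}{\sqrt{\bar G}}+\bar f_4=\frac1{\bar u'}\Big(f_3\frac{\sqrt E}{\sqrt G}+f_4\Big),
\]
understood as identities of functions on $M^2$.

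Finally I would substitute these into the exponent on the left and change variables in each integral: $\bar v=\bar v(v)$, $d\bar v=\bar v'(v)\,dv$ in the first integral (with the fixed first slot $\bar u$ corresponding, via the invariance of the integrand, to the fixed $u$), and $\bar u=\bar u(u)$, $d\bar u=\bar u'(u)\,du$ in the second (with $\bar v_0=v_0$ held fixed). The factors $1/\bar v'$ and $1/\bar u'$ cancel the Jacobians, and the endpoints match because $\bar u_0=\bar u(u_0)=u_0$ and $\bar v_0=\bar v(v_0)=v_0$; hence the left exponent equals $\int_{v_0}^v\big(f_1+f_2\sqrt G/\sqrt E\big)(u,v)\,dv+\int_{u_0}^u\big(f_3\sqrt E/\sqrt G+f_4\big)(u,v_0)\,du$, which is exactly the right exponent, and equality of the exponentials follows. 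I expect the only delicate part to be the bookkeeping in the rescaling step — recognizing the derivative-homogeneity of the numerators in \eqref{eq:functions} and keeping track of which of $u,v$ each factor $\bar u'$, $\bar v'$ depends on; once that is in place, the rest is a routine change of variables.
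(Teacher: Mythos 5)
Your proposal is correct and follows essentially the same route as the paper: establish $\sqrt{E}=\sqrt{\bar E}\,\bar u_u$, $\sqrt{G}=\sqrt{\bar G}\,\bar v_v$, deduce $\bar f_1=f_1/\bar v'$, $\bar f_2=f_2/\bar u'$, $\bar f_3=f_3/\bar v'$, $\bar f_4=f_4/\bar u'$, combine into the two invariant integrands, and change variables in the integrals. You merely spell out in more detail the step the paper leaves implicit (the invariance of the geometric functions under the separated reparametrization and the derivative-homogeneity of the numerators in \eqref{eq:functions}), which is a correct and welcome elaboration.
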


\begin{proof}
Note that under the change of the parameters $\bar u=\bar u(u)$, $\bar v= \bar v(v) $ we have
$$\sqrt {E} = \sqrt {\bar E} \, {\bar u}_u; \quad \sqrt {G} = \sqrt {\bar G} \, {\bar v}_v,$$
 which imply that 
\begin{equation*} 
\begin{array}{ll} 
	\vspace{2mm}
		f_1 (u,v) = {\bar f_1} (\bar u(u),\bar v(v)) \, {\bar v}_v;  & \quad f_2 (u,v) = {\bar f_2} (\bar u(u),\bar v(v)) \, {\bar u}_u;  \\
  \vspace{2mm}
		f_3 (u,v) = {\bar f_3} (\bar u(u),\bar v(v)) \, {\bar v}_v;  & \quad f_4 (u,v) = {\bar f_4} (\bar u(u),\bar v(v)) \, {\bar u}_u.
	\end{array}   
\end{equation*}
Then, we obtain
$$
	\ds (f_1+f_2\frac{\sqrt {G}}{\sqrt {E}})(u,v)=(\bar f_1+\bar f_2\frac{\sqrt {\bar G}}{\sqrt {\bar E}})(\bar u(u),\bar v(v))\frac{d\bar v}{dv} \ ;
$$
$$
	\ds (f_3\frac{\sqrt {E}}{\sqrt {G}}+f_4)(u,v)=(\bar f_3\frac{\sqrt {\bar E}}{\sqrt {\bar G}}+\bar f_4)(\bar u(u),\bar v(v))\frac{d\bar u}{du} \ .
$$
Hence, 
$$
	e^{\ds\int_{v_0}^v\left(f_1+f_2\frac{\sqrt G}{\sqrt E}\right)(u,v)dv
		                +\int_{u_0}^u\left(f_3\frac{\sqrt {E}}{\sqrt {G}}+f_4\right)(u,v_0)du} =
$$
$$
	=e^{\ds\int_{v_0}^v\left(\bar f_1+\bar f_2\frac{\sqrt {\bar G}}{\sqrt {\bar E}}\right)(\bar u(u),\bar v(v))\frac{d\bar v}{dv}{dv}
		                +\int_{u_0}^u\left(\bar f_3\frac{\sqrt {\bar E}}{\sqrt {\bar G}}+\bar f_4\right)(\bar u(u),\bar v(v_0))\frac{d\bar u}{du}du} = 
$$
$$
	=e^{\ds\int_{\bar v_0}^{\bar v}\left(\bar f_1+\bar f_2\frac{\sqrt {\bar G}}{\sqrt {\bar E}}\right)(\bar u,\bar v)d\bar v
		                +\int_{\bar u_0}^{\bar u}\left(\bar f_3\frac{\sqrt {\bar E}}{\sqrt {\bar G}}+\bar f_4\right)(\bar u,\bar v_0)d\bar u} \ .
$$
\end{proof}

\begin{prop} \label{eq:ExistenceCanonicalParameters}
 Each surface free of minimal points locally admits canonical principal parameters.
\end{prop}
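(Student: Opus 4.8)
The plan is to start from an arbitrary principal parametrization $(u,v)$ of $M^2$ on a neighbourhood of a given point and to produce a change of parameters of the special form $\bar u=\bar u(u)$, $\bar v=\bar v(v)$ after which the functions $\bar\varphi,\bar\psi$ associated with the new parameters through \eqref{eq:phi-psi} are identically $1$. Fix a point $(u_0,v_0)$ in the parameter domain together with the constants $c_1,c_2$, and let $\varphi(u)$ and $\psi(v)$ be the functions given by \eqref{eq:phi-psi}. Since the induced metric is positive definite we have $\sqrt E>0$ and $\sqrt G>0$, and the exponential factors in \eqref{eq:phi-psi} are positive; hence $\varphi>0$ and $\psi>0$ on the neighbourhood.

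Next I would set
$$
	\bar u(u)=u_0+\int_{u_0}^{u}\varphi(t)\,dt\,, \qquad \bar v(v)=v_0+\int_{v_0}^{v}\psi(s)\,ds\,.
$$
Because $\varphi,\psi>0$, these are smooth strictly increasing functions, hence local diffeomorphisms near $u_0$ and $v_0$, and they satisfy $\bar u(u_0)=u_0$, $\bar v(v_0)=v_0$, $\bar u_u=\varphi$, $\bar v_v=\psi$. Reparametrizing each variable separately only rescales $z_u$ and $z_v$ without changing their directions, so the lines $u=\mathrm{const}$ and $v=\mathrm{const}$ remain the principal lines of the surface; thus $(\bar u,\bar v)$ are again principal parameters. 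As in the proof of Lemma \ref{eq:Lemma}, the coefficients of the new first fundamental form satisfy $\sqrt E=\sqrt{\bar E}\,\bar u_u$ and $\sqrt G=\sqrt{\bar G}\,\bar v_v$, one obtains the corresponding functions $\bar f_1,\dots,\bar f_4$, and \eqref{eq:phi-psi} applied in the barred parameters (with the same base point $(u_0,v_0)$ and the same constants $c_1,c_2$) produces functions $\bar\varphi(\bar u)$ and $\bar\psi(\bar v)$.

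Finally I would combine these ingredients. Substituting $\sqrt E=\sqrt{\bar E}\,\bar u_u$ into the first line of \eqref{eq:phi-psi} and using Lemma \ref{eq:Lemma} to replace the integral in the exponent by the corresponding integral in the barred parameters, one gets
$$
	\varphi(u)=\sqrt{\bar E}\,\bar u_u\,
	e^{-\ds\int_{\bar v_0}^{\bar v}\left(\bar f_1+\bar f_2\frac{\sqrt{\bar G}}{\sqrt{\bar E}}\right)d\bar v
	   -\ds\int_{\bar u_0}^{\bar u}\left(\bar f_3\frac{\sqrt{\bar E}}{\sqrt{\bar G}}+\bar f_4\right)(\bar u,\bar v_0)\,d\bar u+c_1}
	=\bar u_u(u)\,\bar\varphi(\bar u(u))\,.
$$
Since $\bar u_u(u)=\varphi(u)>0$, this forces $\bar\varphi(\bar u(u))\equiv1$, i.e. $\bar\varphi\equiv1$ on the whole neighbourhood; the same computation with the roles of $u$ and $v$ interchanged gives $\bar\psi\equiv1$. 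By Definition \ref{D:def-can}, $(\bar u,\bar v)$ are canonical principal parameters, which proves the proposition. The only genuine content is the behaviour of $\sqrt E$, $\sqrt G$ and the functions $f_i$ under a change $\bar u=\bar u(u)$, $\bar v=\bar v(v)$ — precisely what is recorded in Lemma \ref{eq:Lemma} — so with that lemma in hand there is no real obstacle; the point requiring care is merely to verify that separate reparametrizations of the two variables preserve the principal parametrization, and to keep the base point and the constants fixed so that Lemma \ref{eq:Lemma} applies verbatim.
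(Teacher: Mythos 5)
Your proposal is correct and follows essentially the same route as the paper: the same reparametrization $\bar u=u_0+\int_{u_0}^u\varphi$, $\bar v=v_0+\int_{v_0}^v\psi$, combined with Lemma \ref{eq:Lemma} and the transformation rule $\sqrt E=\sqrt{\bar E}\,\bar u_u$ to conclude $\bar\varphi=\bar\psi=1$. The only difference is cosmetic (you write the identity as $\varphi=\bar u_u\,\bar\varphi$ rather than computing $\bar E=E/(\bar u_u)^2$ directly, and you make explicit the positivity of $\varphi,\psi$ ensuring the change of variables is a local diffeomorphism, which the paper leaves implicit).
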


\begin{proof}

Given a pair of principal parameters $(u,v)$ we introduce new parameters $\bar u$, $\bar v$ by the formulas
\begin{equation*} \label{eq:defCanonParam}
	\begin{array}{l}
		\bar u=\ds\int_{u_0}^u\sqrt Ee^{-\ds\int_{v_0}^v\left(f_1+f_2\frac{\sqrt G}{\sqrt E}\right)dv
		                -\int_{u_0}^u\left(f_3\frac{\sqrt E}{\sqrt G}+f_4\right)(u,v_0)du+c_1} +u_0 \vspace{2mm} \ ;\\
		\bar v=\ds\int_{v_0}^v\sqrt Ge^{-\ds\int_{u_0}^u\left(f_3\frac{\sqrt E}{\sqrt G}+f_4\right)du
		                -\int_{v_0}^v\left(f_1+f_2\frac{\sqrt G}{\sqrt E}\right)(u_0,v)dv+c_2} +v_0 \ ,
	\end{array}
\end{equation*}  
or equivalently,
$$
	\bar u=\int_{u_0}^u \varphi(u) du +u_0 \ ;   \qquad  \bar v=\int_{v_0}^v \psi(v) dv +v_0 \,.
$$
Then  $\bar u=\bar u(u)$ and $\bar v=\bar v(v)$, so the new parameters 
$\bar u$, $\bar v$ are also principal. Moreover, $\bar u_0=\bar u(u_0)=u_0$, 
$\bar v_0=\bar v(v_0)=v_0$. Then
$$
	\bar u_u=\sqrt Ee^{-\ds\int_{v_0}^v\left(f_1+f_2\frac{\sqrt G}{\sqrt E}\right)dv
		                -\int_{u_0}^u\left(f_3\frac{\sqrt {E}}{\sqrt {G}}+f_4\right)(u,v_0)du+c_1} \ ;
$$
$$
	\bar v_v=\sqrt Ge^{-\ds\int_{u_0}^u\left(f_3\frac{\sqrt E}{\sqrt G}+f_4\right)du
		                -\int_{v_0}^v\left(f_1+f_2\frac{\sqrt G}{\sqrt E}\right)(u_0,v)dv+c_2} \ .
$$
Using Lemma \ref{eq:Lemma} we have
$$
	\sqrt{\overline E}=\sqrt{\frac{E}{(\bar u_u)^2}}=e^{\ds\int_{v_0}^v\left(f_1+f_2\frac{\sqrt G}{\sqrt E}\right)dv
		                +\int_{u_0}^u\left(f_3\frac{\sqrt {E}}{\sqrt {G}}+f_4\right)(u,v_0)du-c_1}
$$

$$
	=e^{\ds\int_{\bar v_0}^{\bar v}\left(\bar f_1+\bar f_2\frac{\sqrt {\bar G}}{\sqrt {\bar E}}\right)(\bar u,\bar v)d\bar v
		                +\int_{\bar u_0}^{\bar u}\left(\bar f_3\frac{\sqrt {\bar E}}{\sqrt {\bar G}}+ \bar f_4\right)(\bar u,\bar v_0)d\bar u-c_1}
$$
and hence $\bar\varphi (\bar u)=1$.
In a similar way, we obtain that $\bar\psi(\bar v)=1$. So, the parameters $(\bar u,\bar v)$ are canonical.

\end{proof}

\begin{lem} If $(u,v)$ and $(\bar u,\bar v)$ are canonical principal parameters
in a neighbourhood of a point $p$, then
$$
	\bar u=\pm u +u_0; \qquad\qquad	\bar v=\pm v+v_0,
$$
or
$$
	\bar u=\pm v+v_0; \qquad\qquad	\bar v=\pm u+u_0 \ .
$$
\end{lem}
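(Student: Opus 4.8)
The plan is to combine the rigidity of the principal net with the definition of canonical parameters, fed through Lemma~\ref{eq:Lemma}. Since $M^2$ is free of minimal points, $\varkappa^2-k>0$ everywhere, so the symmetric Weingarten-type operator $\gamma$ has two distinct eigenvalues at each point; hence the two principal directions, and the two families of principal lines, are globally well defined and smooth. As a parametrization is principal exactly when its coordinate curves are principal lines ($F=M=0$), the coordinate curves of $(u,v)$ and of $(\bar u,\bar v)$ coincide as point sets, up to possibly interchanging the two families. Thus either (i) $\bar u=\bar u(u)$, $\bar v=\bar v(v)$, or (ii) $\bar u=\bar u(v)$, $\bar v=\bar v(u)$. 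I would then observe that the interchange $(u,v)\mapsto(v,u)$ sends a canonical principal parametrization to one of the same type: it is obviously principal, and by \eqref{eq:Frenet-typeFormulas} the geometric functions transform by $\nu_1\leftrightarrow\nu_2$, $\gamma_1\leftrightarrow\gamma_2$, $\lambda\mapsto\lambda$, $\mu\mapsto-\mu$ (flipping $l$ to keep the frame positively oriented), so that in \eqref{eq:functions} one gets $f_1\leftrightarrow f_4$ and $f_2\leftrightarrow f_3$; hence the functions $\varphi,\psi$ of \eqref{eq:phi-psi} are merely interchanged and remain identically $1$. Applying this to $(\bar u,\bar v)$ when necessary, it suffices to treat case (i); the conclusion of case (ii) then follows from case (i) applied to $(\bar u,\bar v)$ and the swapped parametrization $(v,u)$.

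So assume $\bar u=\bar u(u)$, $\bar v=\bar v(v)$. Replacing $(\bar u,\bar v)$ by $(\bar u-\bar u_0+u_0,\ \bar v-\bar v_0+v_0)$ — a translation, which clearly preserves being a canonical principal parametrization — I may assume $\bar u(u_0)=u_0$ and $\bar v(v_0)=v_0$, so that $p$ has the same coordinates in both systems. Under this change $\sqrt{\bar E}=\sqrt E/|\bar u_u|$ and $\sqrt{\bar G}=\sqrt G/|\bar v_v|$, while Lemma~\ref{eq:Lemma} tells us that the exponential factor occurring in \eqref{eq:phi-psi} is the same for $(u,v)$ and for $(\bar u,\bar v)$. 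Comparing the first equalities of \eqref{eq:phi-psi} for the two parametrizations therefore gives
$$
	\varphi(u)=|\bar u_u|\,\bar\varphi\big(\bar u(u)\big),\qquad \psi(v)=|\bar v_v|\,\bar\psi\big(\bar v(v)\big).
$$
Since both parametrizations are canonical, $\varphi\equiv\bar\varphi\equiv\psi\equiv\bar\psi\equiv 1$, so $|\bar u_u|\equiv 1$ and $|\bar v_v|\equiv 1$. On a connected neighbourhood $\bar u_u$ is continuous and nowhere zero, hence $\bar u_u\equiv 1$ or $\bar u_u\equiv -1$; with $\bar u(u_0)=u_0$ this gives $\bar u=u$ or $\bar u=2u_0-u$, and similarly for $\bar v$. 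Undoing the translation yields $\bar u=\pm u+\mathrm{const}$, $\bar v=\pm v+\mathrm{const}$ in case (i), which together with case (ii) gives the two alternatives of the statement.

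The step I expect to cost the most care is the displayed identity $\varphi(u)=|\bar u_u|\,\bar\varphi(\bar u(u))$: one must verify that the two exponential factors in \eqref{eq:phi-psi} really coincide, which is exactly where Lemma~\ref{eq:Lemma} and the preliminary normalization of the base point are used, and one must use that being ``canonical'' fixes \eqref{eq:phi-psi} completely and not merely up to a constant factor — this is what upgrades ``$\bar u_u$ is a nonzero constant'' to ``$\bar u_u=\pm1$''. The transformation rules for the geometric functions and for the $f_i$ under $(u,v)\mapsto(v,u)$, and the reduction in the first paragraph (which tacitly uses the classical fact that the principal net of a surface free of minimal points is unique), are routine but should be carried out with care.
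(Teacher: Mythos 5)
Your proposal is correct and follows essentially the same route as the paper: split into the two cases $\bar u=\bar u(u),\ \bar v=\bar v(v)$ versus the swapped one, then use Lemma~\ref{eq:Lemma} to identify the exponential factors and conclude from $\varphi\equiv\bar\varphi\equiv 1$ that $|\bar u_u|\equiv 1$ (the paper writes this as $1=\varphi(u)/(\bar u_u)^2$), hence $\bar u=\pm u+\mathrm{const}$, and similarly for $\bar v$. Your extra care with the base-point normalization and with checking that the swap $(u,v)\mapsto(v,u)$ preserves canonicity only fills in details the paper dispatches with ``in a similar way.''
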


\begin{proof}
Since both pairs $(u,v)$ and $(\bar u,\bar v)$ are principal parameters,  one of the following cases is possible:
$$\bar u=\bar u(u), \quad \bar v= \bar v(v);$$
or  
$$\bar u=\bar u(v), \quad \bar v= \bar v(u).$$
Suppose that $\bar u=\bar u(u)$, $\bar v= \bar v(v) $. Using Lemma \ref{eq:Lemma} we obtain
$$
	1=\bar\varphi(\bar u,\bar v)=\sqrt{\overline E} e^{-\ds\int_{\bar v_0}^{\bar v}\left(\bar f_1+\bar f_2\frac{\sqrt {\bar G}}{\sqrt {\bar E}}\right)(\bar u,\bar v)d\bar v
		                -\int_{\bar u_0}^{\bar u}\left(\bar f_3\frac{\sqrt {\bar E}}{\sqrt {\bar G}}+ \bar f_4\right)(\bar u,\bar v_0)d\bar u+c_1} =
$$
$$
	=\sqrt{\frac{E(u,v_0)}{(\bar u_u)^2}} e^{-\ds\int_{v_0}^v\left(f_1+f_2\frac{\sqrt G}{\sqrt E}\right)(u,v)dv
		                -\int_{u_0}^u\left(f_3\frac{\sqrt {E}}{\sqrt {G}}+f_4\right)(u,v_0)du+c_1} =
$$
$$
	=\frac{\varphi(u)}{|\bar u_u|}=\frac{1}{|\bar u_u|}									
$$
and hence  we get $\bar u=\pm u+u_0$. Analogously, we obtain $\bar v=\pm v+v_0$.

In a similar way, in the other case we get $\bar u=\pm v+v_0; \;\; \bar v=\pm u+u_0$. 

\end{proof}




\setcounter{equation}{0}

\section{Fundamental Theorem in terms of canonical principal parameters}

According to Definition \ref{D:def-can}, in the case of canonical principal parameters $(u,v)$ we have
$$
	\ds\sqrt E=e^{\ds\int_{v_0}^v\left(f_1+f_2\frac{\sqrt G}{\sqrt E}\right)dv
		                +\int_{u_0}^u\left(f_3\frac{\sqrt E}{\sqrt G}+f_4\right)(u,v_0)du-c_1}
$$
$$
	\ds\sqrt G=e^{\ds\int_{u_0}^u\left(f_3\frac{\sqrt E}{\sqrt G}+f_4\right)du
		                +\int_{v_0}^v\left(f_1+f_2\frac{\sqrt G}{\sqrt E}\right)(u_0,v)dv-c_2}
$$
and hence, 
$$
	\ds\sqrt E(u,v_0)=e^{\ds\int_{u_0}^u\left(f_3\frac{\sqrt E}{\sqrt G}+f_4\right)(u,v_0)du-c_1}
$$
$$
	\ds\sqrt G(u,v_0)=e^{\ds\int_{u_0}^u\left(f_3\frac{\sqrt E}{\sqrt G}+f_4\right)(u,v_0)du-c_2}
$$
The last two expressions imply 
$$
	\left(\ds\sqrt {\frac{E}{G}}\right)(u,v_0)=e^{c_2-c_1}=:c= const,
$$
and hence
$$
	\ds\sqrt E(u,v_0)=e^{\ds\int_{u_0}^u\left(cf_3+f_4\right)(u,v_0)du-c_1}
$$
Analogously, we obtain
$$
	\ds\sqrt G(u_0,v)=e^{\ds\int_{v_0}^v\left(f_1+\frac 1cf_2\right)(u_0,v)dv-c_2}
$$
Denote
\begin{equation} \label{eq:g1&g2}
	\begin{array}{l}
		\vspace{2mm}
	\ds g_1(u)=e^{\ds\int_{u_0}^u\left(cf_3+f_4\right)(u,v_0)du-c_1} \\
		\vspace{2mm}
	\ds g_2(v)=e^{\ds\int_{v_0}^v\left(f_1+\frac 1cf_2\right)(u_0,v)dv-c_2}
	\end{array}
\end{equation}

Now, we shall prove the following Bonet-type fundamental theorem in terms of canonical principal parameters.

\begin{thm} \label{eq:MainTheorem}
Let $\nu_1(u,v)$, $\nu_2(u,v)$, $\lambda(u,v)$, and  $\mu(u,v)$ ($\mu \neq 0$) be smooth functions defined in a domain $\mathcal D$, ${\mathcal D} \subset \R^2$ and 
$f_i(u,v)$, $i=1,2,3,4$ be defined by formulas \eqref{eq:functions}. Let $\varphi(u,v)$, $\psi(u,v)$ be a solution to the Cauchy problem
\begin{equation*} 
	\begin{array}{l}
	\vspace{2mm}
	\ds\varphi_v=f_1\varphi+f_2\psi;  \\ 
	\vspace{2mm}
	\ds\psi_u   =f_3\varphi+f_4\psi; \\
	\end{array}
	\qquad \varphi(u,v_0)=g_1(u); \;\; \psi(u_0,v)=g_2(v),
\end{equation*} 
where $g_1(u)$ and $g_2(v)$ are defined by \eqref{eq:g1&g2}.
Suppose that the following equations are satisfied:
\begin{equation} \label{eq:4.3} 
	\begin{array}{ll}
	\vspace{2mm}
		\ds\nu_1\nu_2-(\lambda^2+\mu^2)  = & \ds-\frac1{\varphi \psi}
	           \left( \left(f_1\frac \varphi\psi+f_2\right)_v+\left(f_3+f_4\frac \psi\varphi\right)_u \right);   \\
\vspace{2mm}
\varphi\psi\,(\nu_1-\nu_2)\mu  = &
		\ds\left(2\frac{\lambda}{\mu} \frac{\psi_u}{\psi}-\frac{\nu_1-\nu_2}{\mu} \frac{\varphi_v}{\psi}+\frac{\lambda_u}{\mu}-\frac{(\nu_1)_v}{\mu} \frac{\varphi}{\psi} \right)_v - \\
		 &  \ds-  \left(2\frac{\lambda}{\mu} \frac{\varphi_v}{\varphi} + \frac{\nu_1-\nu_2}{\mu} \frac{\psi_u}{\varphi}-\frac{(\nu_2)_u}{\mu}  \frac{\psi}{\varphi}+\frac{\lambda_v}{\mu}\right)_u
	\end{array}
\end{equation}
Then, there exists a unique (up to a position in $\mathbb R^4$) surface parametrized by canonical 
principal parameters $(u,v)$ with geometric functions $\nu_1(u,v)$, $\nu_2(u,v)$, $\lambda(u,v)$, $\mu(u,v)$.
\end{thm}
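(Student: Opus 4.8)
\medskip
\noindent\emph{Proof (sketch).}
The plan is to recover from the data $\nu_1,\nu_2,\lambda,\mu$ and the Cauchy solution $\varphi,\psi$ all eight geometric functions of a surface, to apply the Bonnet-type Fundamental Theorem of \cite{G-M-2010} (Theorem~4.1), and then to check that the resulting principal parameters are canonical. First I would set $\sqrt E=\varphi$, $F=0$, $\sqrt G=\psi$; since $g_1(u_0)=e^{-c_1}>0$ and $g_2(v_0)=e^{-c_2}>0$, the form $\varphi^2\,du^2+\psi^2\,dv^2$ is Riemannian near $(u_0,v_0)$. In accordance with \eqref{eq:gamma} I would put $\gamma_1=-\varphi_v/(\varphi\psi)$, $\gamma_2=-\psi_u/(\varphi\psi)$, and (using $\mu\neq0$) define $\beta_1,\beta_2$ by dividing by $\mu$ the two expressions for $\mu\beta_1,\mu\beta_2$ displayed just before \eqref{eq:3.3-v2}.

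Next I would verify that $\nu_1,\nu_2,\gamma_1,\gamma_2,\beta_1,\beta_2,\lambda,\mu$ satisfy the whole system \eqref{eq:BasicSystem}. Its third and fourth equations hold by the very definition of $\beta_1,\beta_2$; substituting these $\beta_i$ into the first two equations of \eqref{eq:3.1} shows, exactly as in the derivation of \eqref{eq:3.3-v2}, that the first two equations of \eqref{eq:BasicSystem} are equivalent to $(\sqrt E)_v=f_1\sqrt E+f_2\sqrt G$ and $(\sqrt G)_u=f_3\sqrt E+f_4\sqrt G$, i.e.\ to the equations $\varphi_v=f_1\varphi+f_2\psi$, $\psi_u=f_3\varphi+f_4\psi$ of the Cauchy problem, which hold by hypothesis. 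The fifth (Gauss) equation of \eqref{eq:BasicSystem}, after inserting $\gamma_1,\gamma_2$ and using $\varphi_v/\psi=f_1\tfrac\varphi\psi+f_2$ and $\psi_u/\varphi=f_3+f_4\tfrac\psi\varphi$, collapses to $\nu_1\nu_2-(\lambda^2+\mu^2)=-\tfrac1{\varphi\psi}\big((f_1\tfrac\varphi\psi+f_2)_v+(f_3+f_4\tfrac\psi\varphi)_u\big)$, which is the first equation of \eqref{eq:4.3}. Finally, the two parenthesised expressions in the second equation of \eqref{eq:4.3} are exactly $\varphi\beta_1$ and $\psi\beta_2$, while $\varphi\psi\gamma_1=-\varphi_v$ and $\varphi\psi\gamma_2=-\psi_u$; hence multiplying the sixth equation of \eqref{eq:BasicSystem} by $\varphi\psi$ turns it into $(\varphi\beta_1)_v-(\psi\beta_2)_u=\varphi\psi(\nu_1-\nu_2)\mu$, i.e.\ the second equation of \eqref{eq:4.3}. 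So all six equations of \eqref{eq:BasicSystem} hold.

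Having checked \eqref{eq:BasicSystem}, I would invoke \cite{G-M-2010}, Theorem~4.1: the metric $\varphi^2\,du^2+\psi^2\,dv^2$ together with these eight functions determines a surface $M^2\subset\R^4$, unique up to a motion, parametrised by principal parameters $(u,v)$, free of minimal points (since $\mu\neq0$), and having $\nu_1,\nu_2,\lambda,\mu$ as its geometric functions with respect to the frame $\{x,y,b,l\}$ of \eqref{eq:Frenet-typeFormulas}. To see that $(u,v)$ are canonical in the sense of Definition~\ref{D:def-can}, I would use that $\sqrt E=\varphi$, $\sqrt G=\psi$, so $f_1+f_2\sqrt G/\sqrt E=(\ln\varphi)_v$ and $f_3\sqrt E/\sqrt G+f_4=(\ln\psi)_u$, together with $\varphi(u,v_0)=g_1(u)$, $\psi(u_0,v)=g_2(v)$ and the fact that along the axes $v=v_0$ and $u=u_0$ the ratios $\varphi/\psi$ and $\psi/\varphi$ are constant, equal to $c$ and $1/c$ respectively (this follows from $g_1(u_0)/g_2(v_0)=e^{c_2-c_1}=c$ and uniqueness for the ODEs obtained by restricting the Cauchy equations to the axes); plugging all this into \eqref{eq:phi-psi} makes the exponents telescope and the one-variable functions appearing there reduce identically to $1$. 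For uniqueness, any surface parametrised by canonical principal parameters with geometric functions $\nu_1,\nu_2,\lambda,\mu$ has $\sqrt E,\sqrt G$ solving the same linear Cauchy problem, hence equal to $\varphi,\psi$, hence the same eight geometric functions, so the uniqueness part of \cite{G-M-2010}, Theorem~4.1 gives that the two surfaces differ by a motion of $\R^4$.

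I expect the main obstacle to be the computational bookkeeping in the second step — showing that equations (5) and (6) of \eqref{eq:BasicSystem}, once $\gamma_1,\gamma_2,\beta_1,\beta_2$ are rewritten via $\varphi,\psi$ and the Cauchy equations are used, reproduce \eqref{eq:4.3} verbatim; this is long but entirely routine. A minor point is to confirm that the reconstructed data meet the standing hypotheses of \cite{G-M-2010} (positivity of $\varphi,\psi$, valid near $(u_0,v_0)$, and non-vanishing of the denominator in \eqref{eq:functions}, built into the assumption that the $f_i$ are defined). Everything else is a direct appeal to that theorem plus the short telescoping computation.
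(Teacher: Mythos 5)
Your proposal is correct and follows essentially the same route as the paper: set $\sqrt E=\varphi$, $\sqrt G=\psi$, define $\gamma_1,\gamma_2,\beta_1,\beta_2$ from \eqref{eq:gamma} and the $\mu\beta_i$ formulas, verify that the Cauchy equations and \eqref{eq:4.3} reproduce all six equations of \eqref{eq:BasicSystem}, and then invoke Theorem~4.1 of \cite{G-M-2010}. Your explicit verification that the resulting parameters are canonical (via the initial data $g_1,g_2$ and the telescoping in \eqref{eq:phi-psi}) and your uniqueness argument are welcome additions that the paper's own proof leaves implicit.
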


\begin{proof}
Let $\nu_1(u,v)$, $\nu_2(u,v)$, $\lambda(u,v)$, and  $\mu(u,v)$ be smooth functions, $f_i(u,v)$, $i=1,2,3,4$ be determined by 
formulas \eqref{eq:functions}, and $g_1(u)$ and $g_2(v)$ be defined by \eqref{eq:g1&g2}.  Let us consider  the following Cauchy problem
\begin{equation} \label{eq:4.1}
	\begin{array}{l}
	\vspace{2mm}
	\ds\varphi_v=f_1\varphi+f_2\psi  \\ 
	\vspace{2mm}
	\ds\psi_u   =f_3\varphi+f_4\psi \\
	\end{array}
\end{equation} 
\begin{equation} \label{eq:4.1-i}
	\begin{array}{l}
	\vspace{2mm}
	\varphi(u,v_0)=g_1(u) \\
	\vspace{2mm} 
	\psi(u_0,v)=g_2(v)
	\end{array}
\end{equation} 

Note that this is an initial value problem for a canonical hyperbolic system of PDEs (see e.g. \cite{Top}) and thus system \eqref{eq:4.1} determines (at least locally) functions  $\varphi(u,v)$ and $\psi(u,v)$ 
satisfying the initial conditions \eqref{eq:4.1-i}.

 We define the functions $ E=\varphi^2 $, $ G=\psi^2 $,  and 
\begin{equation} \label{eq:3.7}
	 \ \gamma_1=-\frac{(\sqrt E)_v}{\sqrt E\sqrt G};  \qquad \gamma_2=-\frac{(\sqrt G)_u}{\sqrt E\sqrt G};
\end{equation} 
\begin{equation} \label{eq:3.7-a}
\begin{array}{l}
\vspace{2mm}
	 \ds \beta_1= \frac{1}{\mu \varphi \psi} \left(2\lambda \psi_u -(\nu_1-\nu_2) \varphi_v + \lambda_u \psi - (\nu_1)_v \varphi \right);  \\ 
	\vspace{2mm}
	\ds \beta_2= \frac{1}{\mu \varphi \psi} \left(2\lambda \varphi_v + (\nu_1-\nu_2) \psi_u -(\nu_2)_u \psi + \lambda_v \varphi  \right).
	\end{array}
\end{equation}

Now, the right hand side of the first equation of (\ref{eq:4.3}) is expressed as
$$
	-\frac1{\sqrt {EG}}\left( \left(\frac{(\sqrt E)_{v}}{\sqrt G}\right)_v +\left(\frac{(\sqrt G)_{u}}{\sqrt E}\right)_u\right)=
$$

$$
	=\frac1{\sqrt {EG}}\left( -\frac{(\sqrt E)_{vv}}{\sqrt G}+\frac{(\sqrt E)_v(\sqrt G)_v}{G}
			-\frac{(\sqrt G)_{uu}}{\sqrt E}+\frac{(\sqrt E)_u(\sqrt G)_u}{E} 
	  \right) =
$$
$$
	=\frac1{\sqrt G}(\gamma_1)_v+\frac1{\sqrt E}(\gamma_2)_u-\Big((\gamma_1)^2+(\gamma_2)^2\Big) \ .
$$
So, the first equation of (\ref{eq:4.3})  can be written as
$$
	\nu_1\nu_2-(\lambda^2+\mu^2)=\frac1{\sqrt E}(\gamma_2)_u+\frac1{\sqrt G}(\gamma_1)_v-\Big((\gamma_1)^2+(\gamma_2)^2\Big) ,
$$
which is exactly the fifth equation of  (\ref{eq:BasicSystem}).

Analogously, using the second equation of (\ref{eq:4.3})  we obtain
$$
(\nu_1-\nu_2)\mu  = \frac{(\beta_1)_v}{\sqrt G}-\gamma_1\beta_1- \frac{(\beta_2)_u}{\sqrt E}+ \gamma_2\beta_2, 
$$
which is exactly the last equation of  (\ref{eq:BasicSystem}).

On the other hand, by a straightforward calculation we can see that (\ref{eq:3.7}) and (\ref{eq:3.7-a}) imply
that the first four equations of  (\ref{eq:BasicSystem}) are fulfilled.
Consequently, system (\ref{eq:BasicSystem}) is satisfied.
Moreover, the following inequalities hold:
$$
	\frac{\mu_u}{2\mu\gamma_2+\nu_1\beta_2-\lambda\beta_1}>0 \ ;  \qquad\qquad
	\frac{\mu_v}{2\mu\gamma_1+\nu_2\beta_1-\lambda\beta_2}>0 \ .
$$
Now,  the statement follows from Theorem 4.1 in \cite{G-M-2010}.

 \end{proof}

The meaning of this theorem is that each surface in the Euclidean space $\R^4$ is determined up to a motion (a position in the space)  by four smooth functions 
$\nu_1(u,v)$, $\nu_2(u,v)$, $\lambda(u,v)$,  $\mu(u,v)$, $\mu \neq 0$, satisfying a system of PDEs. Moreover, the parameters $(u,v)$ are the canonical principal parameters of the surface.




\setcounter{equation}{0}

\section{The case of minimal surfaces}

Until now we supposed that the surface is nowhere minimal (i.e. $H$ does not vanish) 
and is parametrized by principal parameters. 
Special isothermal parameters were introduced on any minimal surface by Itoh in \cite{Itoh} and these parameters were used to prove that the  minimal non-super-conformal surfaces in  $\R^4$  are determined up to a motion by the Gaussian curvature and the normal curvature satisfying  system  \eqref{e:sys-min}.

An equivalent system satisfied by two invariant functions $\mu$ and $\nu$ is derived in  \cite{G-M-2008-minimal} on the base of the so-called canonical parameters which have the same geometrical meaning as the parameters of Itoh. 
The Frenet-type derivative formulas obtained in \cite{G-M-2008-minimal} for minimal surfaces can be obtained as a special case of formulas 
\eqref{eq:Frenet-typeFormulas} in this paper, when we take $\lambda=0$ and  $\nu=\nu_1= - \nu_2$.

The parameters $(u,v)$ of a minimal surface are said to be canonical (see \cite{G-M-2008-minimal}), if
$$
	E\sqrt{|\mu^2-\nu^2|}=1  \,; \qquad G\sqrt{|\mu^2-\nu^2|}=1 \ .
$$
On the other hand, it is easy to see that if $\lambda=0$, $\nu=\nu_1= - \nu_2$, then  the functions $f_1$, $f_2$, $f_3$, $f_4$ defined by 
 \eqref{eq:functions},  get the following form 
$$
	f_1=-\frac{(\mu^2-\nu^2)_v}{4(\mu^2-\nu^2)}\,; \qquad f_2=f_3=0 \,; \qquad  f_4=-\frac{(\mu^2-\nu^2)_u}{4(\mu^2-\nu^2)} \ .
$$
Hence, for $c_1=c_2=\frac{1}{4} (\ln |\mu^2-\nu^2|)(u_0,v_0) $ the squares of the  functions in  \eqref{eq:phi-psi} take the form
$$
	E\sqrt{|\mu^2-\nu^2|}  \,; \qquad G\sqrt{|\mu^2-\nu^2|}  \,.
$$
So, the definition of canonical parameters of minimal surfaces given in  \cite{G-M-2008-minimal} 
can be considered as a special case of our definition of canonical principal parameters in the general case of surfaces in $\R^4$.
Then, Theorem 6.4 in \cite{G-M-2008-minimal} determining the minimal surfaces in terms of two invariant functions $\nu$ and $\mu$ can be obtained as a particular case of Theorem \ref{eq:MainTheorem}.  




\setcounter{equation}{0}

\section{The case of surfaces with parallel normalized mean curvature vector field}

Let $M^2$ be a surface with parallel normalized mean curvature vector field. As proved in \cite{G-M-Fil}, it follows that $\beta_1=\beta_2=0$ and hence the last equation of (\ref{eq:BasicSystem}) implies $\nu_1=\nu_2$, since $\mu \neq  0$. 
Denote this function by $\nu := \nu_1=\nu_2$. Then, formulas  (\ref{eq:3.1})  take the form
\begin{equation} \label{eq:6.1}
	\begin{array}{l}
	\vspace{2mm}
		\ds \mu_u=-2\mu\frac{(\sqrt{ G})_u}{\sqrt G}\ ; \\
		\vspace{2mm}
	  \ds	\mu_v=-2\mu\frac{(\sqrt{ E})_v}{\sqrt E}\ ; \\
		\vspace{2mm}
	  \ds \lambda_u =\nu_v\frac{\sqrt E}{\sqrt G}+\lambda\frac{\mu_u}{\mu}\ ;\\
		\vspace{2mm}
	  \ds \lambda_v =\nu_u\frac{\sqrt G}{\sqrt E}+\lambda\frac{\mu_v}{\mu}\ .
	\end{array}
\end{equation}
Using equations \eqref{eq:6.1} we obtain the following expressions for the functions $f_i$:
$$
	f_1=-\frac{\lambda^2(\mu^2)_v+\mu^2((\mu^2)_v+(\nu^2)_v)+\lambda^4(\ln\mu^2)_v+2\lambda^3\frac{\sqrt G}{\sqrt E} \nu_u
	    +\lambda(\mu^2-\nu^2) (\lambda(\ln \mu^2)_v+ 2\frac{\sqrt G}{\sqrt E} \nu_u)}{4(\lambda^2+\mu^2)^2-4\lambda^2\nu^2} \ ;
$$
$$
	f_2=\frac{2\sqrt G\lambda (\lambda^2-\nu^2)\nu_u+\mu^2(\sqrt E(\nu^2)_v+2\sqrt G\lambda \nu_u)    }
	         {4\sqrt G((\lambda^2+\mu^2)^2-\lambda^2\nu^2)} \ ;
$$
$$
	f_3=\frac{2\sqrt E\lambda (\lambda^2-\nu^2)\nu_v+\mu^2(\sqrt G(\nu^2)_u+2\sqrt E\lambda \nu_v)}
	         {4\sqrt E((\lambda^2+\mu^2)^2-\lambda^2\nu^2)} \ ;
$$
$$
	f_4=-\frac{\lambda^2(\mu^2)_u+\mu^2((\mu^2)_u+(\nu^2)_u)+\lambda^4(\ln\mu^2)_u+2\lambda^3\frac{\sqrt E}{\sqrt G} \nu_v
	    +\lambda(\mu^2-\nu^2) (\lambda(\ln \mu^2)_u+ 2\frac{\sqrt E}{\sqrt G} \nu_v)}{4(\lambda^2+\mu^2)^2-4\lambda^2\nu^2} \ .
$$
Then, we obtain the equalities
$$
	f_1+\frac{\sqrt G}{\sqrt E}f_2=-\frac{\mu_v}{2\mu} \ ;
$$
$$
	\frac{\sqrt E}{\sqrt G}f_3+f_4=-\frac{\mu_u}{2\mu} \ .
$$
Hence, if we take $c_1=\ln \sqrt{|\mu(u_0,v_0)|}$, we obtain
$$
	\varphi(u)=\sqrt E\, e^{\ds-\int_{v_0}^v\left(f_1+f_2\frac{\sqrt G}{\sqrt E}\right)dv
		                -\int_{u_0}^u\left(f_3\frac{\sqrt E}{\sqrt G}+f_4\right)(u,v_0)du+c_1}=\sqrt E\,\sqrt{|\mu|} \ .
$$
Analogously, if  $c_2=\ln \sqrt{|\mu(u_0,v_0)|}$,
then $\psi(v)=\sqrt G\,\sqrt{|\mu|}$. 

Consequently,  in the  case of surfaces with parallel normalized mean curvature vector field, our definition of canonical parameters is equivalent to  the definition given in \cite{G-M-Fil}.




\setcounter{equation}{0}

\section{Examples }

In this section, we give examples of surfaces, demonstrating the theory and showing how we can apply Theorem  \ref{eq:MainTheorem}. Each example bellow gives  a rotational
surface in $\mathbb R^4$. 

\vskip 3mm
\begin{example} \label{ex:1} Consider the functions 
$$
	\nu_1=1\,; \qquad \nu_2=1\,; \qquad \lambda=0\,; \qquad \mu=1 \,.
$$
Using the formulas in Section 3, we obtain that the functions $f_i$  vanish identically.
Then,  system \eqref{eq:4.1} gives $\varphi(u,v)=1$, $\psi(u,v)=1$.
Following the proof of Theorem \ref{eq:MainTheorem}, we obtain
$$
	E=1 \,; \qquad G=1 \,; \qquad \beta_1=\beta_2=0 \,.
$$
Now, it is easy to see that system (\ref{eq:4.3}) is satisfied. 
According to Theorem \ref{eq:MainTheorem},  a surface in $\mathbb R^4$ is determined up to a motion. Suppose that at the initial point $z(0,0)=(0,0,0,0)$ we have the following positively 
oriented orthonormal frame: 
\begin{equation} \label{eq:initialConditions}
	\begin{array}{l}
	\vspace{2mm}
		x(0,0)=(1,0,0,0)\,;  \qquad  y(0,0)=(0,1,0,0) \,; \\
		\vspace{2mm}
		b(0,0)=(0,0,1,0)\,;  \qquad l(0,0)=(0,0,0,1) \,.
	\end{array}
\end{equation}

The derivative formulas of the surface have the following form:
\begin{equation} \label{eq:derivativeFormulas}
	\begin{array}{ll}
	\vspace{2mm}
		x_u=b              & \qquad\qquad b_u= -x \\
		\vspace{2mm}
		y_u=l & \qquad\qquad b_v= - y\\
		\vspace{2mm}
		x_v=l & \qquad\qquad l_u=- y \\
		\vspace{2mm}
		y_v=b & \qquad\qquad l_v=- x
	\end{array}
\end{equation}
First of all, this system implies the vector equations
\begin{equation} \label{eq:vectorEquations}
	x_{uu}+x=0     \,;  \qquad   x_{vv}+x=0  \ . 
\end{equation}
The first equation of \eqref{eq:vectorEquations} has the following general solution
\begin{equation} \label{eq:generalSolution}
	x(u,v) =A_1(v)\cos u+A_2(v)\sin u \ .
\end{equation}
Using the second equation of (\ref{eq:vectorEquations}), we obtain the following
ordinary differential equations:
$$
	A_1''(v)+A_1(v)=0   \,;  \qquad  A_2''(v)+A_2(v)=0 \ .
$$ 
Using (\ref{eq:initialConditions}), (\ref{eq:derivativeFormulas}) and 
(\ref{eq:generalSolution}), we determine the initial conditions of these equations:
$$
\begin{array}{ll}
\vspace{2mm}
A_1(0)=(1,0,0,0) \,; & \quad 	A_2(0)=(0,0,1,0) \,; \\
\vspace{2mm}
A_1'(0)=(0,0,0,1) \,; &\quad 	A_2'(0)=(0,-1,0,0) \ .
\end{array}
$$
So, we have the solutions:
$$
	A_1(v)=(\cos v,0,0,\sin v) \ ;  \qquad  A_2(v)=(0,-\sin v,\cos v,0) \ ,
$$
and hence
$$
	x(u,v) =(\cos u\cos v,-\sin u\sin v,\sin u\cos v,\cos u\sin v) \ .
$$ 
Since $x=z_u$, we obtain
$$
	z(u,v) =(\sin u\cos v,\cos u\sin v,-\cos u\cos v,\sin u\sin v) +w(v) \ .
$$
Hence, using $z(0,0)=(0,0,0,0)$ we obtain $w(0)=(0,0,1,0)$. We have also 
$$
	y=z_v=(-\sin u\sin v,\cos u\cos v,\cos u\sin v,\sin u\cos v)+w'(v) \ .
$$
On the other hand, by  (\ref{eq:derivativeFormulas}) 
$$
	y=-l_u=-x_{uv}=(-\sin u\sin v,\cos u\cos v,\cos u\sin v,\sin u\cos v) \ .
$$
Consequently, $w(v)=(0,0,1,0)$. Thus we get the following parametrization of the surface
\begin{equation} \label{eq:surf1}
	S: z(u,v) =(\sin u\cos v,\cos u\sin v,1-\cos u\cos v,\sin u\sin v)  \ .
\end{equation}

With this example we show how we can obtain the parametrization in terms of principal canonical parameters of a surface determined by four given functions $\nu_1$, $\nu_2$, $\lambda$, and  $\mu$ ($\mu \neq 0$).

The other invariants of the surface are
$$
	\gamma_1=\gamma_2=0 \ ;   \qquad   k=-4 \ ;  \qquad \varkappa=0 \ ;  \qquad K=0 \ .
$$
Since $\varkappa=0$, the surface $M^2$ has flat normal connection. On the other hand, since
$M^2$ is not minimal ($\varkappa^2-k\neq 0$) and $\lambda=0$, it is a non-trivial Chen surface,
see \cite {Chen}, \cite {G-V-V}.
\end{example}

\vskip 3mm
\begin{example} Let us consider the surface $M^2$ given with the parametrization
$$
	z(u,v)=(u\cos v,u\sin v, \cos v, \sin v) \ . 
$$
The coefficients of the first fundamental form are
$$
	E=1 \ ;   \qquad  F=0 \ ;  \qquad G=1+u^2  \ ,
$$ 
and it can easily be calculated that for the second fundamental tensor we have
$$
\begin{array}{ll}
\vspace{2mm}
\sigma(z_u,z_u)=(0,0,0,0) \ ; \\
\vspace{2mm}
\sigma(z_u,z_v)= \ds{\frac1{\sqrt{1+u^2}}\big( \sin v,\cos v,u\sin v,-u\cos v \big)} \ ;\\
\vspace{2mm}
\sigma(z_v,z_v)=(0,0,-\cos v,-\sin v) \ .
\end{array}
$$
The parametrization of the surface is principal and we can consider the following orthonormal pair 
$(b,l)$ of normal vector fields:
$$
	b=\sigma(z_v,z_v) \ ;    \qquad\qquad    l=\sigma(z_u,z_v) \ .
$$
The invariants of the surface are
$$
\begin{array}{llll}
\vspace{2mm}
\nu_1=0 \ ; & \qquad \nu_2=\ds{\frac1{1+u^2}} \ ; & \qquad   \lambda=0  \ ; & \qquad \mu=\ds{\frac1{1+u^2}} \ ;\\
\vspace{2mm}
\gamma_1=0 \ ; & \qquad \gamma_2=-\ds{\frac{u}{1+u^2}} \ ; & \qquad  \beta_1=0  \ ; & \qquad \beta_2=\ds{\frac u{1+u^2}}   \ ;
\end{array}
$$
$$
	k=0 \ ; \qquad \varkappa=-\frac1{(1+u^2)^2} \ ;  \qquad  K=-\frac1{(1+u^2)^2}  \ .
$$
The parameters are not canonical. To switch to canonical parameters, we can set
$$
	u=\sinh \bar u \ ;    \qquad   v=\bar v \ .
$$	

\end{example}

\vskip 3mm
\begin{example}  Let the surface $M^2$ be parametrized by 
$$
	z(u,v)=(\cosh u\cos v,\cosh u\sin v, \cos u, \sin u) \ . 
$$
Then
$$
	E=\cosh^2 u \ ;   \qquad  F=0 \ ;  \qquad G=\cosh^2 u  \ ,
$$ 
and
$$
\begin{array}{l}
\vspace{2mm}
	\sigma(z_u,z_u)= \ds{\left(\frac{\cos v}{\cosh u} , \frac{\sin v}{\cosh u}, -\cos u + \sin u \tanh u, -\sin u - \cos u \tanh u\right)} \ ; \\
\vspace{2mm}
	\sigma(z_v,z_v)= \ds{\left(-\frac{\cos v}{\cosh u} , -\frac{\sin v}{\cosh u},- \sin u \tanh u,  \cos u \tanh u\right)} \ .
	\end{array}
$$
Since $\sigma(z_u,z_u)$ and  $\sigma(z_v,z_v)$ are not parallel, the parametrization is not principal.
To find a principal parametrization, we follow a standard procedure. First, using the 
orthonormal pair of normal vectors $(a_1,a_2)$ given by
$$
	a_1=\sigma(z_v,z_v) \ ;   \qquad\qquad    a_2=\left(0 , 0,\cos u, \sin u \right)  \ ;
$$
we may derive the coefficients $c_{ij}^k$. Then we obtain the coefficients of the second fundamental form:
$$
	L=0 \ ;     \qquad\qquad    M=-\frac1{\cosh^2u} \ ;   \qquad\qquad     N=0 \ .
$$
Since $E = G$, an appropriate change of the parameters is 	
$$
	u =\bar u+\bar v \ ; \qquad\qquad   v =\bar u-\bar v  \ .
$$
With respect to the principal parameters $(\bar u,\bar v)$ we find the coefficients of the first fundamental form
$$
	\bar E=2\cosh^2(\bar u+\bar v) \ ;   \qquad \bar F=0 \ ;  \qquad \bar G=2\cosh^2(\bar u+\bar v) \ ;  
$$
and the following orthonormal pair	$(b,l)$
of normal vector fields:
$$
	 b(\bar u,\bar v) =\big(0, 0, -\cos(\bar u +\bar v), -\sin(\bar u +\bar v)\big) \ ;
$$ 
$$
	 l(\bar u,\bar v) = \left(-\frac{\cos(\bar u-\bar v)}{\cosh(\bar u+\bar v)},
	        -\frac{\sin(\bar u-\bar v)}{\cosh(\bar u+\bar v)},-\sin(\bar u+\bar v)\tanh(\bar u +\bar v),\cos(\bar u+\bar v)\tanh(\bar u+\bar v)\right) \ .
$$
Then, we obtain the invariants of the surface:
$$
	\bar\nu_1=\bar\nu_2=\frac1{2\cosh^2(\bar u+\bar v)} \ ; \qquad \bar\lambda=\frac1{2\cosh^2(\bar u+\bar v)} \ ; 
	               \qquad    \bar\mu=-\frac1{\cosh^2(\bar u+\bar v)} \ ;
$$
$$
	\bar\gamma_1=\bar\gamma_2=\bar\beta_1=\bar\beta_2=-\frac{\tanh(\bar u+\bar v)}{\sqrt 2\cosh(\bar u+\bar v)} 	  \ ;
$$
$$
	\bar k=-\frac1{\cosh^8(\bar u+\bar v)} \ ;  \qquad \bar\varkappa=0 \ ;  \qquad  \bar K=-\frac1{\cosh^4(\bar u+\bar v)}   \ .
$$
Since  $\bar \varkappa=0$, the surface is  with flat normal connection.
However, the parametrization is not canonical. A pair $(\tilde u,\tilde v)$ of canonical principal parameters
with $(u_0,v_0)=(0,0)$ is given by
$$
	\tilde u=\sqrt 2\,\bar u \ ;   \qquad   \tilde v=\sqrt 2\,\bar v    \ . 
$$

\end{example}

\vskip 3mm
\begin{example} \label{ex:4} 
Now, we consider the surface $M^2$ given with the parametrization
\begin{equation} \label{eq:surf2}
	z(u,v)=(\cos u,\sin u, \sin v, \cos v) \ . 
\end{equation}
 In this case, we obtain
$$
	\sigma(z_u,z_u)=(-\cos u,-\sin u,0,0) \ ;   \qquad\qquad
	\sigma(z_v,z_v)=(0,0,-\sin v,-\cos v) \ .
$$
Since $\sigma(z_u,z_u)$ and $\sigma(z_v,z_v)$ are not parallel, the parametrization 
is not principal. To obtain a principal parametrization we change the parameters as follows:
$$
	u =\bar u+\bar v \ ; \qquad\qquad   v =\bar u-\bar v  \ .
$$
With respect to these parameters the coefficients of the first fundamental form are 
$$
	\bar E=2 \ ;   \qquad\qquad \bar F=0 \ ;  \qquad\qquad  \bar G=2 \,.
$$
The orthonormal pair $(b,l)$ of normal vector fields is
$$
	b(\bar u, \bar v)=-\frac1{\sqrt 2}\big(\cos(\bar u+\bar v),\sin(\bar u+\bar v),\sin(\bar u-\bar v),\cos(\bar u-\bar v)\big) \ ;
$$
$$
	l(\bar u, \bar v)=\frac1{\sqrt 2}\big(-\cos(\bar u+\bar v),-\sin(\bar u+\bar v),\sin(\bar u-\bar v),\cos(\bar u-\bar v)\big) \ ;
$$
and the geometric functions of the surface are
$$
	\bar\gamma_1=\bar\gamma_2=0 \ ; \qquad \bar\nu_1=\bar\nu_2=\frac1{\sqrt 2}  \ ;
	\qquad  \bar\beta_1=\bar\beta_2=0 \ ; \qquad \bar\lambda=0  \ ; \qquad \bar\mu=\frac1{\sqrt 2} \ ;
$$
$$
	\bar k=-1 \ ;  \qquad \bar\varkappa=0 \ ; \qquad \bar K=0 \ .
$$
In particular, the surface has flat normal connection 
and is a non-trivial Chen surface as in Example \ref{ex:1}. Actually, we note that  the  functions determining the surface in this example
are proportional  to the corresponding functions in Example \ref{ex:1}.
In the following remark we comment on this.
\end{example}

\begin{remark} Let $S\ : \ z=z(u,v)$ be a surface in $\mathbb R^4$. For a given non-zero constant $\alpha$ we define
a homothetic surface  
$$\widetilde S \ :\ \tilde z(u,v) = \alpha z(u,v).$$
 Then, it can be calculated that the geometric functions 
$\tilde\nu_1,\tilde\nu_2,\tilde\gamma_1,\tilde\gamma_2,\tilde\beta_1,\tilde\beta_2,\tilde\lambda,\tilde\mu$  of $\widetilde S$ are
obtained from the corresponding functions of $S$ by dividing with $\alpha$, whereas the coefficients 
of the first fundamental forms are related by multiplying with $\alpha^2$. In particular, if we multiply by
$\sqrt 2$ the  geometric functions of the surface  in Example \ref{ex:4}, we obtain a homothetic surface $\widetilde S$ 
which have the following geometric functions:
$$
	\tilde\nu_1=\tilde\nu_2=1 \ ;	\qquad  \tilde\beta_1=\tilde\beta_2=0 \ ; \qquad \tilde\lambda=0 \ ; \qquad \tilde\mu=1 \ .
$$
They are equal to the corresponding invariant functions of the surface in  Example \ref{ex:1}.
Consequently, by Theorem \ref{eq:MainTheorem} it follows that these surfaces must coincide (up to a motion),
whereas from their parametrizations \eqref{eq:surf1} and \eqref{eq:surf2} this is not clear. However, 
there is a motion transforming $\widetilde S$ into the surface from Example \ref{ex:1}. This motion is given by 
a rotation with the following SO(4)-matrix:
$$
	\frac1{\sqrt 2}
	\left(\begin{matrix}
		0   &  1   &  1  &  0  \\
		0   &  1   &  -1 &  0  \\
		-1  &  0   &  0  &  -1  \\
		-1  &  0   &  0  &  1
	\end{matrix}\right)       
$$
and  a translation with the vector $(0,0,1,0)$.
Note that the surface with parametrization
$$
	\frac1{\sqrt 2}(\cos u,\sin u, \cos v, \sin v)  
$$
has the same invariants except for $\mu$, which has opposite sign. The reason is that this surface has 
opposite orientation.

\end{remark}

\section*{Conclusion:}
In the present paper, we introduce canonical principal parameters on the general class of surfaces in the Euclidean 4-space $\R^4$. These parameters generalize the canonical parameters introduced for minimal surfaces  and for surfaces with parallel normalized mean curvature vector field in $\R^4$. In terms of canonical parameters  any surface in $\R^4$  is determined up to a motion by four geometrically determined functions satisfying a system of natural PDEs.
The same approach can be applied to the general class of spacelike surfaces in the Minkowski 4-space $\R^4_1$.


\vskip 6mm 
\textbf{Acknowledgments:}
The  authors are partially supported by the National Science Fund, Ministry of Education and Science of Bulgaria under contract KP-06-N82/6. 

\vskip 6mm

\end{document}